
\documentclass[12pt,a4paper]{article}
\usepackage{amsfonts}
\usepackage{amsmath}
\usepackage[onehalfspacing,nodisplayskipstretch]{setspace}
\usepackage{eucal}

\setcounter{MaxMatrixCols}{10}

\newtheorem{theorem}{Theorem}

\newenvironment{proof}[1][Proof]{\noindent\textbf{#1.} }{\ \rule{0.5em}{0.5em}}
\input{tcilatex}
\begin{document}

\title{Large Deviations Inequalities for Unequal Probability Sampling
Without Replacement\thanks{%
This note answers a question posed by Noam Nisan. We thank Noam Nisan and
Benji Weiss for useful discussions and suggestions.}}
\author{Dean P. Foster\thanks{%
Department of Statistics, Wharton, University of Pennsylvania, Philadelphia,
and Amazon, New York. \emph{e-mail}: \texttt{dean@foster.net} \ \emph{web
page}: \texttt{http://deanfoster.net}} \and Sergiu Hart\thanks{%
Institute of Mathematics, Department of Economics, and Federmann Center for
the Study of Rationality, The Hebrew University of Jerusalem. \emph{e-mail}: 
\texttt{hart@huji.ac.il} \ \emph{web page}: \texttt{%
http://www.ma.huji.ac.il/hart}}}
\maketitle

\begin{abstract}
We provide bounds on the tail probabilities for simple procedures that
generate random samples \emph{without replacement}, when the probabilities
of being selected need not be equal.
\end{abstract}

\def\@biblabel#1{#1\hfill}
\def\thebibliography#1{\section*{References}
\addcontentsline{toc}{section}{References}
\list
{}{
 \labelwidth 0pt
 \leftmargin 1.8em
 \itemindent -1.8em
 \usecounter{enumi}}
\def\newblock{\hskip .11em plus .33em minus .07em}
\sloppy\clubpenalty4000\widowpenalty4000
\sfcode`\.=1000\relax\def\baselinestretch{1}\large \normalsize}
\let\endthebibliography=\endlist%

Let $n\geq k$ be two positive integers. Consider a population of size $n$
with associated \emph{relative} \emph{weights} $0\leq w^{1},...,w^{n}\leq 1/k
$ such that\footnote{%
Superscripts are used as indices for elements (or subsets) of the population.%
} $\sum_{i=1}^{n}w^{i}=1$. We want to choose a random sample \emph{without
replacement} from the population $[n]=\{1,...,n\}$ such the probabilities of
being in the sample are proportional to the weights; i.e., letting $S$
denote the random sample, we require that%
\begin{equation}
\mathbb{P}\left[ i\in S\right] =kw^{i}  \label{eq:prob}
\end{equation}%
for every $i\in \lbrack n]$ (this explains the constraint $w^{i}\leq 1/k$;
the factor $k$ comes from\footnote{%
We write $|F|$ for the number of elements of a finite set $F$.} $\sum_{i\in
\lbrack n]}\mathbb{P}\left[ i\in S\right] =\left\vert S\right\vert =k$).
Moreover, we want to do this in such a way that we obtain \textquotedblleft
concentration" or \textquotedblleft large deviations" inequalities, as in
the case of sampling with replacement. That is, for every subset of the
population $A\subset \lbrack n],$ with high probability its proportion of
the sample $\left\vert S\cap A\right\vert /k$ does not exceed its relative
weight $\alpha =\sum_{i\in A}w^{i}$ by much: for $\delta >0,$ 
\begin{equation*}
\mathbb{P}\left[ \frac{1}{k}\left\vert S\cap A\right\vert \geq \alpha
+\delta \right] 
\end{equation*}%
is exponentially small in $k$ and $\delta $.

For comparison, assume that the sampling is done \emph{with replacement},
i.e., by $k$ i.i.d. draws from the population $[n]$, with probabilites $%
w^{1},...,w^{n}$. Let $N^{i}$ be the number of times that $i$ has been
selected, then $N^{A}:=\sum_{i\in A}N^{i},$ the number of times that an
element of $A$ has been selected, is a \textrm{Binomial}$(k,\alpha )$ random
variable. Therefore, by the Chernoff--Hoeffding inequality (see (\ref%
{eq:ch-ho}) in the Appendix), for every $\delta >0$ we have%
\begin{eqnarray}
\mathbb{P}\left[ \frac{1}{k}N^{A}\geq \alpha +\delta \right] &=&\mathbb{P}%
\left[ N^{A}-k\alpha \geq k\delta \right]  \notag \\
&\leq &\exp \left( -D\left( \left. \alpha +\delta \right\Vert \alpha \right)
\cdot k\right) \leq \exp \left( -2\delta ^{2}k\right) ,  \label{eq:with-repl}
\end{eqnarray}%
where%
\begin{equation*}
D(q||p):=q\ln \frac{q}{p}+(1-q)\ln \frac{1-q}{1-p}
\end{equation*}%
denotes the Kullback--Leibler divergence of from $p$ to $q$ (for $0<p,q<1$).
(When the sampling is done without replacement each $N^{i}$ takes only the
values $0$ and $1,$ and $N^{A}=\left\vert S\cap A\right\vert $.)

There are various methods to generate random samples without replacement so
that condition (\ref{eq:prob}) holds. However, they do not immediately yield
tail probability bounds such as (\ref{eq:with-repl}). We will therefore
consider martingale-based procedures, as they are naturally amenable to
large deviation analysis. Such procedures have been proposed by Deville and
Till\'{e} [1998] (they call them \textquotedblleft splitting methods"); in
particular, Procedure $\mathfrak{X}$ below is their \textquotedblleft
pivotal method."

\section{Martingale-Based Procedures}

It is convenient to rescale the weights so that they add to $k;$ thus, put $%
\Delta :=\{x\in \lbrack 0,1]^{n}:\sum_{i=1}^{n}x^{i}=k\},\ $and let $\Delta
_{0}:=\{x\in \{0,1\}^{n}:\sum_{i=1}^{n}x^{i}=k\}$ be the set of extreme
points of $\Delta ,$ i.e., those weight vectors that contain $k$ ones and $%
n-k$ zeros. For every set $A\subset \lbrack n]$ we write $x^{A}:=\sum_{i\in
A}x^{i}.$ A procedure that generates a random sample $S$ of size $k$ such
that $\mathbb{P}\left[ i\in S\right] =x^{i}$ for every $i\in \lbrack n]$ is
called an $x$\emph{-procedure}. The vector of normalized weights $%
(w^{1},...,w^{n})$ yields the vector of weights $%
x_{0}:=kw=(kw^{1},...,kw^{n})$ in $\Delta .$

We start with a trivial observation.

\textbf{Observation.} \emph{Let }$x=\sum_{\ell =1}^{L}\lambda _{\ell
}x_{\ell }$\emph{\ where }$x_{\ell }\in \Delta $\emph{\ and }$\lambda _{\ell
}\geq 0$\emph{\ for each }$\ell $\emph{, and }$\sum_{\ell =1}^{L}\lambda
_{\ell }=1$\emph{\ (and thus }$x\in \Delta $\emph{\ as well). If }$\mathfrak{%
X}_{\ell }$\emph{\ is an }$x_{\ell }$\emph{-procedure for each }$\ell ,$%
\emph{\ then the procedure }$\mathfrak{X}$\emph{\ that with probability }$%
\lambda _{\ell }$\emph{\ follows }$\mathfrak{X}_{\ell }$\emph{\ is an }$x$%
\emph{-procedure.}

This is immediate by%
\begin{equation*}
\mathbb{P}_{\mathfrak{X}}\left[ i\in S\right] =\sum_{\ell =1}^{L}\lambda
_{\ell }\mathbb{P}_{\mathfrak{X}_{\ell }}\left[ i\in S\right] .
\end{equation*}

Iterating this observation yields a martingale: a stochastic process where
at each step the (conditional) expectation of the \textquotedblleft value"
of the next state equals the \textquotedblleft value" of the current state;
in our case, these \textquotedblleft values" will be weight vectors in $%
\Delta .$

Let thus $(X_{t})_{t=0,1,2,...}$ be a $\Delta $-valued martingale starting
with the constant $X_{0}=x_{0}=kw$ and ending at a finite time $T$ with $%
X_{T}$ in $\Delta _{0}$ (i.e., $X_{T}^{i}$ is either $0$ or $1$ for every $i$%
).\footnote{%
The time $T$ may well be random; for simplicity, once $\Delta _{0}$ is
reached the martingale stays constant, i.e., $X_{t}=X_{T}$ for all $t\geq T.$%
} Since for every $x$ in $\Delta _{0}$ there is a unique $x$%
-procedure---namely, the deterministic choice of the sample as those $i$
whose $x^{i}$ is $1$ (i.e., $S=\{i:x^{i}=1\}$)---the observation above
yields an $x_{0}$-procedure that uses these deterministic $X_{T}$-choices.
Thus, $i$ belongs to the random sample $S$ if and only if $X_{T}^{i}=1$;
and, for every set $A\subset \lbrack n]$, the number of elements of $A$ in
the sample is%
\begin{equation*}
|S\cap A|=\sum_{i\in A}X_{T}^{i}=X_{T}^{A}.
\end{equation*}

The martingale constructions below are based on moving weights around as
much as possible, subject to the constraint that all weights stay between $0$
and $1.$

\section{General Procedures $\mathfrak{X}$}

We describe a class of simple procedures that follow the \textquotedblleft
pivotal method" of Deville and Till\'{e} [1998].

\bigskip

\begin{quotation}
\noindent \textbf{Procedure }$\mathfrak{X}$. Start with $X_{0}=x_{0}.$ At
every step take two indices $i\neq j$ such that\footnote{%
If this is step $t,$ then $x^{i}$ stands for $X_{t-1}^{i}$ and $\tilde{x}%
^{i} $ for $X_{t}^{i}.$} $0<x^{i},x^{j}<1$ (for now the order in which these
steps are carried out is arbitrary, or, alternatively, random). We
distinguish two cases:\footnote{%
The reason that we have conveniently included $x^{i}+x^{j}=1$ in Case 2 will
become clear in the next section.}

\textbf{(i)} If $x^{i}+x^{j}<1$ then either the weight of $j$ is transferred
to $i$ or the weight of $i$ is transferred to $j$: with probability $%
x^{i}/(x^{i}+x^{j})$ the new weights are $\tilde{x}^{i}=x^{i}+x^{j}$ and $%
\tilde{x}^{j}=0,$ and with probability $x^{j}/(x^{i}+x^{j})$ the new weights
are $\tilde{x}^{i}=0$ and $\tilde{x}^{j}=x^{i}+x^{j}$ (the probabilities are
determined by the martingale condition, i.e., the expectation of $\tilde{x}%
^{i}$ being equal to $x^{i}$).

\textbf{(ii)} If $x^{i}+x^{j}\geq 1$ then either $1-x^{i}$ is transferred
from $j$ to $i$ or $1-x^{j}$ is transferred from $i$ to $j$: with
probability $(1-x^{j})/(2-x^{i}-x^{j})$ the new weights are $\tilde{x}^{i}=1$
and $\tilde{x}^{j}=x^{i}+x^{j}-1$, and with probability $%
(1-x^{i})/(2-x^{i}-x^{j})$ the new weights are $\tilde{x}^{i}=x^{i}+x^{j}-1$
and $\tilde{x}^{j}=1$ (the probabilities are again determined by the
martingale condition, i.e., the expectation of $\tilde{x}^{i}$ being equal
to $x^{i}$).
\end{quotation}

\bigskip

We have in fact defined a class of procedures $\mathfrak{X}$, depending on
the order in which the active $i,j$ are chosen at every step; we will see in
the next section that doing so in a consistent way allows proving better
probability bounds.

The procedures $\mathfrak{X}$ have the following properties. First, at each
step (at least) one weight becomes $0$ or $1$ and will no longer change,
which implies that the number of steps $T$ until $\Delta _{0}$ is reached is
at most $n.$ Second, at each step the amount of weight that is moved is at
most $1$.

Put%
\begin{equation*}
\pi (\eta ,\delta ,k):=\left[ \left( \frac{\eta }{\eta +\delta }\right)
^{\eta +\delta }\exp (\delta )\right] ^{k}\leq \exp \left( -\frac{\delta
^{2}/2}{\eta +\delta /3}k\right)
\end{equation*}%
(as we will see below, these are probability bounds given by the Freedman
inequality (\ref{eq:freedman}) in the Appendix).

\begin{theorem}
\label{th:freedman}The procedures $\mathfrak{X}$ satisfy $\mathbb{P}\left[
i\in S\right] =kw^{i}$ for every $i$ in $[n]$, and%
\begin{eqnarray}
\mathbb{P}\left[ \frac{1}{k}\left\vert S\cap A\right\vert \geq \alpha
+\delta \right] &\leq &\pi (\eta ,\delta ,k)\text{\ \ and}
\label{eq:ineq-freedman} \\
\mathbb{P}\left[ \frac{1}{k}\left\vert S\cap A\right\vert \leq \alpha
-\delta \right] &\leq &\pi (\eta ,\delta ,k)  \label{eq:ineq-freedman2}
\end{eqnarray}%
for every set $A\subset \lbrack n]$ and $\delta \geq 0$, where 
\begin{equation*}
\alpha :=\sum_{i\in A}w^{i}
\end{equation*}
is the relative weight of $A$, and 
\begin{equation}
\eta :=\alpha -k\sum_{i\in A}(w^{i})^{2}\leq \alpha .  \label{eq:eta}
\end{equation}
\end{theorem}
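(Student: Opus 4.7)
The plan splits into two parts: the marginal identity $\mathbb{P}[i \in S] = kw^i$, and the large-deviations bound. The first is free: by construction each coordinate $X^i_t$ is a bounded martingale, so $\mathbb{P}[i \in S] = \mathbb{E}[X^i_T] = X^i_0 = kw^i$ because $X^i_T \in \{0, 1\}$ is precisely the indicator of $i \in S$.

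For the tail inequalities I plan to run a Bennett/Freedman-style exponential supermartingale argument tailored to $A$. For each $\theta \in \mathbb{R}$, set $f_\theta(x) := (1-x) + xe^\theta = 1 + x(e^\theta - 1)$ and define
\begin{equation*}
L_\theta(t) := \prod_{i \in A} f_\theta(X^i_t).
\end{equation*}
Since $X^i_T \in \{0, 1\}$ with $f_\theta(0) = 1$ and $f_\theta(1) = e^\theta$, we have $L_\theta(T) = \exp(\theta |S \cap A|)$ and $L_\theta(0) = \prod_{i \in A}(1 + kw^i(e^\theta - 1))$.

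The core technical step is to prove that $L_\theta$ is a supermartingale under Procedure $\mathfrak{X}$; granted this, the MGF bound $\mathbb{E}[\exp(\theta |S \cap A|)] \leq L_\theta(0)$ is immediate (optional stopping with the bounded time $T \leq n$). Since only the two indices $i, j$ updated at step $t$ can change $L_\theta$, the check is a case analysis on $|A \cap \{i, j\}| \in \{0, 1, 2\}$ combined with the sub-cases (i), (ii) of the procedure. The case $|A \cap \{i, j\}| = 0$ is trivial. For $|A \cap \{i, j\}| = 2$ the ratio $L_\theta(t)/L_\theta(t-1)$ is in fact deterministic (both scenarios of the randomized move yield the same product of $f_\theta$ values), and the identities
\begin{align*}
f_\theta(x^i) f_\theta(x^j) - f_\theta(x^i + x^j) &= x^i x^j (e^\theta - 1)^2, \\
f_\theta(x^i) f_\theta(x^j) - e^\theta f_\theta(x^i + x^j - 1) &= (1-x^i)(1-x^j)(e^\theta - 1)^2,
\end{align*}
both manifestly non-negative, deliver ratio $\leq 1$ in cases (i) and (ii) respectively. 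The subtlest case is $|A \cap \{i, j\}| = 1$ (say $i \in A$, $j \notin A$): the two random outcomes of the procedure move $f_\theta(X^i_t)$ by genuinely different amounts, and one must take the weighted average explicitly. A short calculation shows the conditional expectation of the ratio equals exactly $1$, ultimately via the identity $x^i f_\theta(x^i + x^j) + x^j = (x^i + x^j) f_\theta(x^i)$ in case (i) and its analogue $(1-x^j) e^\theta + (1-x^i) f_\theta(x^i + x^j - 1) = (2 - x^i - x^j) f_\theta(x^i)$ in case (ii).

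To finish, I apply Bennett's inequality factor-by-factor to $L_\theta(0)$. Each $1 + kw^i(e^\theta - 1)$ equals $\mathbb{E}[\exp(\theta Y_i)]$ for an independent Bernoulli $Y_i$ with mean $kw^i$; centring gives $\log(1 + kw^i(e^\theta - 1)) \leq \theta k w^i + \phi(\theta) k w^i(1 - k w^i)$ with $\phi(\theta) := e^\theta - \theta - 1$. Summing over $i \in A$ and using $\sum_{i \in A} kw^i(1 - kw^i) = k\alpha - k^2 \sum_{i \in A}(w^i)^2 = k\eta$ yields $\mathbb{E}[\exp(\theta |S \cap A|)] \leq \exp(\theta k\alpha + \phi(\theta) k\eta)$. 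Markov's inequality then produces $\mathbb{P}[|S \cap A| \geq k(\alpha + \delta)] \leq \exp(-\theta k\delta + \phi(\theta) k\eta)$, and optimizing at $\theta = \log(1 + \delta/\eta)$ yields exactly $\pi(\eta, \delta, k)$; this last optimization is precisely the one underlying the Freedman/Bennett bound referenced in the statement. The lower tail follows identically from $L_{-\theta}$: the supermartingale check is the same identity with $-\theta$ in place of $\theta$, and Bennett's $Y \leftrightarrow 1 - Y$ symmetry delivers the same factor-wise estimate. The main obstacle I anticipate is the weighted-average algebra in the case $|A \cap \{i, j\}| = 1$, whose cancellation is what makes the $\eta$-sharpened bound possible.
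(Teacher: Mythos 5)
Your proof is correct, but it takes a genuinely different route from the paper's. The paper works with the martingale $X_t^A$ directly: it shows $\mathrm{Var}(Y_t^A\mid\mathcal{F}_{t-1})\le\sum_{i\in A}\mathrm{Var}(Y_t^i\mid\mathcal{F}_{t-1})$ by the same three-way case analysis on the active pair, sums the coordinate-wise identities $\sum_t\mathrm{Var}(Y_t^i\mid\mathcal{F}_{t-1})=x_0^i(1-x_0^i)$ to get $V_T\le k\eta$, and then simply invokes Freedman's inequality (\ref{eq:freedman}) with $v=k\eta$ and $c=k\delta$ (the differences are bounded by $1$ since at most one unit of weight moves per step); the lower tail comes from the martingale $-X_t^A$. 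You instead prove that $L_\theta(t)=\prod_{i\in A}\bigl(1+X_t^i(e^\theta-1)\bigr)$ is a supermartingale; your case-by-case identities check out, and in fact the mixed case $|A\cap\{i,j\}|=1$ is immediate because $f_\theta$ is affine and each coordinate is a martingale, so only the case $i,j\in A$ requires the genuine inequalities, which are exactly the negative-association-type estimates you wrote. This yields the MGF domination $\mathbb{E}[\exp(\theta|S\cap A|)]\le\prod_{i\in A}(1+kw^i(e^\theta-1))$, i.e., domination by independent Bernoullis with means $kw^i$, after which Bennett's centering bound (note $\sum_{i\in A}kw^i(1-kw^i)=k\eta$) and the Chernoff optimization at $\theta=\log(1+\delta/\eta)$ reproduce $\pi(\eta,\delta,k)$ exactly; your lower-tail variant also works since $m-Y\le 1$ allows the same Bennett estimate for negative $\theta$. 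What your route buys: it is self-contained (only Markov's inequality and elementary algebra, no martingale tail inequality cited), and the MGF domination by independent Bernoullis is a stronger intermediate statement than the theorem itself. What the paper's route buys: it is shorter given Freedman as a black box, and the variance bound $V_T\le k\eta$ is reused verbatim in the proof of Theorem \ref{th:fgl}, where the round-based martingale and the Fan--Grama--Liu inequality give the sharper bound $\pi^{\ast}$ --- something your product supermartingale does not obviously deliver.
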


\begin{proof}
We will use the Freedman inequality (\ref{eq:freedman}). To do so we need to
bound $V_{T}:=\sum_{t=1}^{T}$\textrm{Var}$(Y_{t}^{A}|\mathcal{F}_{t-1})$,
where $Y_{t}^{i}:=X_{t}^{i}-X_{t-1}^{i}$ and $Y_{t}^{A}:=\sum_{i\in
A}Y_{t}^{i}$ are the corresponding martingale differences. For each $i$ we
have%
\begin{equation}
\sum_{t=1}^{T}\text{\textrm{Var}}(Y_{t}^{i}|\mathcal{F}_{t-1})=\text{\textrm{%
Var}}(X_{T}^{i})=x_{0}^{i}(1-x_{0}^{i}),  \label{eq:var-i}
\end{equation}%
the first equality because $X_{t}^{i}$ is a martingale, and the second
because $X_{T}^{i}\in \{0,1\},$ and so $\mathbb{P}\left[ X_{T}^{i}=1\right]
=X_{0}^{i}=x_{0}^{i}$ (again, since $X_{t}^{i}$ is a martingale).

Next, we claim that for each $t$%
\begin{equation}
\text{\textrm{Var}}(Y_{t}^{A}|\mathcal{F}_{t-1})\leq \sum_{i\in A}\text{%
\textrm{Var}}(Y_{t}^{i}|\mathcal{F}_{t-1}).  \label{eq:var-r}
\end{equation}%
Indeed, let $i,j$ be the active indices at step $t$; if $i$ and $j$ are both
outside $A,$ then the two sides of (\ref{eq:var-r}) equal $0$; if one of
them, say $i$, is in $A$ and the other, $j$, is outside $A$, then the two
sides equal \textrm{Var}$(Y_{t}^{i}|\mathcal{F}_{t-1})$; and if $i$ and $j$
are both in $A$ then the lefthand side is $0$ (because the total change in
the weight of $A$, namely $Y_{t}^{A},$ is zero).

Adding (\ref{eq:var-r}) over $t$ and then using (\ref{eq:var-i}) yields%
\begin{eqnarray}
V_{T} &=&\sum_{t=1}^{T}\text{\textrm{Var}}(Y_{t}^{A}|\mathcal{F}_{t-1})\leq
\sum_{t=1}^{T}\sum_{i\in A}\text{\textrm{Var}}(Y_{t}^{i}|\mathcal{F}_{t-1}) 
\notag \\
&=&\sum_{i\in A}\sum_{t=1}^{T}\text{\textrm{Var}}(Y_{t}^{i}|\mathcal{F}%
_{t-1})=\sum_{i\in A}x_{0}^{i}(1-x_{0}^{i})  \notag \\
&=&\sum_{i\in A}x_{0}^{i}-\sum_{i\in
A}(x_{0}^{i})^{2}=kw^{A}-k^{2}\sum_{i\in A}(w^{i})^{2}=k\eta .
\label{eq:tau}
\end{eqnarray}

The Freedman's inequality (\ref{eq:freedman}), with $c=k\delta $ and $%
v=k\eta $, then gives%
\begin{eqnarray*}
\mathbb{P}\left[ \frac{1}{k}\left\vert S\cap A\right\vert \geq \alpha
+\delta \right] &=&\mathbb{P}\left[ X_{T}^{A}-X_{0}^{A}\geq k\delta \right]
\leq \left( \frac{k\eta }{k\eta +k\delta }\right) ^{k\eta +k\delta }\exp
(k\delta ) \\
&=&\left[ \left( \frac{\eta }{\eta +\delta }\right) ^{\eta +\delta }\exp
(\delta )\right] ^{k}=\pi (\eta ,\delta ,k),
\end{eqnarray*}%
which is (\ref{eq:ineq-freedman}).

Finally, consider the martingale $-X_{t}^{A}$: its differences are $%
-Y_{t}^{A},$ and so the sum of variances is the same $V_{T}$ above; this
yields%
\begin{eqnarray*}
\mathbb{P}\left[ \frac{1}{k}\left\vert S\cap A\right\vert \leq \alpha
-\delta \right] &=&\mathbb{P}\left[ X_{T}^{A}-X_{0}^{A}\leq -k\delta \right]
\\
&=&\mathbb{P}\left[ \left( -X_{T}^{A}\right) -\left( -X_{0}^{A}\right) \geq
k\delta \right] \leq \pi (\eta ,\delta ,k),
\end{eqnarray*}%
which is (\ref{eq:ineq-freedman2}).
\end{proof}

\section{\textquotedblleft In Order" Procedures $\mathfrak{X}^{\ast }$}

One issue with the general procedures $\mathfrak{X}$ is that their number of
steps is $n$ rather than $k$ (as in sampling with replacement); the Freedman
inequality circumvents this by counting conditional variances rather than
steps. We will now see that running a procedure $\mathfrak{X}$
\textquotedblleft \emph{in order}" (see below; we call this \emph{Procedure }%
$\mathfrak{X}^{\ast }$) allows combining sequences of consecutive steps into
\textquotedblleft \emph{rounds}," in such a way that the number of rounds is
at most $k$, and the martingale that corresponds to these rounds (while
ignoring the individual steps) enjoys the same properties as the original
(step-based) martingale used in the previous section.

\bigskip

\begin{quotation}
\noindent \textbf{Procedure }$\mathfrak{X}^{\ast }$\emph{.} Fix an order on $%
[n],$ say the natural order. Run procedure $\mathfrak{X}$ with the active $i$
and $j$ that exchange weights at each step $t$ being the \emph{minimal}
\textquotedblleft yet undecided" indices, i.e., the minimal $i\neq j$ with%
\footnote{%
See Remarks (a) and (b) below.} $0<X_{t-1}^{i},X_{t-1}^{j}<1.$
\end{quotation}

\bigskip

Put%
\begin{equation}
\pi ^{\ast }(\eta ,\delta ,k):=\exp \left( -D\left( \frac{\eta +\delta }{%
1+\eta }\left\Vert \frac{\eta }{1+\eta }\right. \right) \cdot k\right) .
\label{eq:pi*}
\end{equation}%
Comparing with sampling with replacement, where we had $D(\alpha +\delta
||\alpha )$ in (\ref{eq:with-repl}), we now have $D(\tilde{\alpha}+\tilde{%
\delta}||\tilde{\alpha})$ in (\ref{eq:pi*}) with $\tilde{\alpha}=\alpha
/(1+\alpha )$ and $\tilde{\delta}=\delta /(1+\alpha )$ (we took $\eta
=\alpha $ as a worst case; see the next section).

\begin{theorem}
\label{th:fgl}The procedures $\mathfrak{X}^{\ast }$ satisfy $\mathbb{P}\left[
i\in S\right] =kw^{i}$ for every $i$ in $[n]$, and%
\begin{eqnarray*}
\mathbb{P}\left[ \frac{1}{k}\left\vert S\cap A\right\vert \geq \alpha
+\delta \right] &\leq &\pi ^{\ast }(\eta ,\delta ,k)\text{\ \ and} \\
\mathbb{P}\left[ \frac{1}{k}\left\vert S\cap A\right\vert \leq \alpha
-\delta \right] &\leq &\pi ^{\ast }(\eta ,\delta ,k)
\end{eqnarray*}%
for every set $A\subset \lbrack n]$ and $\delta \geq 0$, where $\alpha
:=\sum_{i\in A}w^{i}$ is the relative weight of $A$, and $\eta :=\alpha
-k\sum_{i\in A}(w^{i})^{2}\leq \alpha .$
\end{theorem}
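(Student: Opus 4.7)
The plan is to adapt the proof of Theorem~\ref{th:freedman} by \emph{aggregating} the step-level martingale of Procedure~$\mathfrak{X}^*$ into a round-level martingale with exactly $k$ increments, and then apply a Chernoff-type bound sharper than Freedman's that exploits the boundedness of these round-level increments.

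First, I would define round $r$ to be the block of steps between the $(r-1)$th and $r$th invocations of Case~(ii). Each Case~(ii) step sets exactly one coordinate of $X_t$ to $1$ (that is, adds exactly one element to $S$), so there are exactly $k$ rounds, and weight conservation $\sum_i X_t^i \equiv k$ then forces every $X_{t_k}^i \in \{0,1\}$. Writing $t_r$ for the (random) time at which round $r$ ends, set $Z_r := X_{t_r}^A$ and $M_r := Z_r - Z_{r-1}$; then $(Z_r)_{r=0}^{k}$ is a martingale with $Z_0 = k\alpha$ and $Z_k = |S \cap A|$.

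Next, I would show that this round martingale inherits two properties. First, the total conditional variance still satisfies $\sum_r \mathrm{Var}(M_r \mid \mathcal{F}_{r-1}) \leq k\eta$: the partition-by-$A$ argument giving (\ref{eq:var-r}) applies unchanged to the stopped differences $M_r^i := X_{t_r}^i - X_{t_{r-1}}^i$ (the required non-positivity of cross-covariances within a round follows from the step-level argument combined with the orthogonality of martingale differences across distinct steps), and orthogonality then preserves the per-coordinate identity $\sum_r \mathrm{Var}(M_r^i \mid \mathcal{F}_{r-1}) = x_0^i(1 - x_0^i)$ from (\ref{eq:var-i}). Second, $|M_r| \leq 1$ almost surely: during each round exactly one index's weight crosses to $1$ (bounding the positive change of $Z_r$ by $1$), and weight conservation bounds the negative change similarly.

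With these two properties in hand, I would apply the following Chernoff-type MGF bound: among mean-zero random variables $M$ with $M \leq 1$ and $\mathbb{E} M^2 \leq \sigma^2$, the MGF is maximized by the two-point law on $\{-\sigma^2, 1\}$ with $\mathbb{P}(M = 1) = \sigma^2/(1 + \sigma^2)$, so
\[
\mathbb{E}[e^{\lambda M}] \leq \frac{e^{-\lambda \sigma^2} + \sigma^2 e^\lambda}{1 + \sigma^2}.
\]
Applying this conditionally to each $M_r$, multiplying over $r = 1, \ldots, k$ (using concavity in $\sigma^2$ of the logarithm of this bound together with the variance sum bound), and invoking Markov followed by the standard Cram\'er optimization in $\lambda > 0$ yields exactly $\pi^*(\eta, \delta, k) = \exp(-k\, D((\eta + \delta)/(1 + \eta) \| \eta/(1 + \eta)))$. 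The lower tail follows from the same argument applied to $-Z_r$.

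The main technical obstacle is the a.s.\ bound $|M_r| \leq 1$. Within a single round the pivot may absorb several partner weights through consecutive Case~(i) steps, a leftover weight in $[0, 1)$ may propagate from the terminating Case~(ii) step into the next round, and the net $A$-weight change involves balancing positive and negative contributions across the touched indices. A careful case analysis---tracking which touched indices lie in $A$ and how the accumulated weights flow---is needed to certify the bound, exploiting in particular that the pivot's accumulated weight just before Case~(ii) is strictly less than~$1$.
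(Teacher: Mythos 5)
Your overall architecture is the same as the paper's: cut the run of $\mathfrak{X}^{\ast}$ at the Case~(ii) times to get a round-level martingale $Z_{\ell}=X_{T_{\ell}}$ with exactly $k$ increments, carry over the total conditional-variance bound $\sum_{\ell}\mathrm{Var}(Z_{\ell}^{A}-Z_{\ell-1}^{A}\mid\mathcal{F}_{T_{\ell-1}})=\sum_{t}\mathrm{Var}(Y_{t}^{A}\mid\mathcal{F}_{t-1})\leq k\eta$ from (\ref{eq:tau}), bound each round increment by $1$, and conclude with a Bennett/Kullback--Leibler-type bound with $v=k\eta$, $c=k\delta$, $T=k$. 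The only stylistic difference is that the paper simply invokes the Fan--Grama--Liu inequality (\ref{eq:fgl}), whereas you re-derive it via the two-point MGF lemma; that lemma and the final Cram\'{e}r optimization do give exactly $\pi^{\ast}(\eta,\delta,k)$, but your combination step (handling \emph{random} conditional variances constrained only through their sum, via the asserted concavity in $\sigma^{2}$) is exactly the delicate part of that known theorem, so you would do better to cite it, as the paper does.

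The genuine gap is the a.s.\ bound $|Z_{\ell}^{A}-Z_{\ell-1}^{A}|\leq 1$, i.e.\ (\ref{eq:diffZ}), which is the heart of this theorem's proof and the whole point of the ``in order'' requirement; you explicitly defer it (``a careful case analysis \dots is needed'') and the one-line justification you do give is not sufficient. Saying that ``exactly one index's weight crosses to $1$, bounding the positive change of $Z_{r}$ by $1$'' fails in the case where the terminal Case~(ii) step awards weight $1$ to the \emph{incoming} element $t+1$: there the pivot $r$ ends the round with weight $\xi+x_{0}^{t+1}-1$, which can exceed its starting weight $x_{0}^{r}$, so two coordinates of $A$ may increase within a single round and ``one coordinate reaches $1$'' does not by itself cap the positive change. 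The correct argument (the paper's) tracks the \emph{total weight transferred}: within a round all Case~(i) steps funnel weight into a single pivot whose accumulated weight is $\xi=x_{0}^{1}+\cdots+x_{0}^{t}<1$; at the terminal step, either (a) the pivot is raised to $1$, in which case it is the only coordinate that increased over the round and the total movement is $1-x_{0}^{r}<1$, or (b) the incoming element is raised to $1$, in which case every coordinate that \emph{lost} weight lies among $\{1,\dots,t\}$, so the total movement is at most $\xi<1$. Without this case analysis the application of the concentration inequality with increments bounded by $1$ is unjustified, so as written the proposal does not yet prove the theorem.
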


\begin{proof}
When running a procedure $\mathfrak{X}^{\ast },$ consider the times where
some weight $i$ becomes $1$ (which implies that $i$ will for sure be in the
sample $S$). This happens $k$ times,\footnote{%
If some starting weight $x_{0}^{i}$ equals $1$ then $i$ must be in the
sample $S,$ and so in general there are $k_{1}:=k-\left\vert
\{i:x_{0}^{i}=1\}\right\vert \leq k$ rounds. For simplicity assume that $%
x_{0}^{i}<1$ for all $i$ in $N$, and thus $k_{1}=k$.} specifically, whenever
we are in case (ii) (i.e., when $x^{i}+x^{j}\geq 1;$ note that only one
weight can become $1$ at a time, because $x^{i},x^{j}<1$ implies $%
x^{i}+x^{j}<2$). Let $T_{0}\equiv 0<T_{1}<T_{2}<...<T_{k}\equiv T$ be these
(random) stopping times, and put $Z_{\ell }:=X_{T_{\ell }}$ for $\ell
=0,1,...,k.$ Then $(Z_{\ell })_{\ell =0,...,k}$ is a martingale, $%
Z_{0}=x_{0},$ and $Z_{k}=X_{T}\in \Delta _{0}.$ Moreover, the sum of the
conditional variances is the same for the two martingales: for every set $%
A\subset \lbrack n]$ we have%
\begin{eqnarray*}
\sum_{\ell =1}^{k}\mathrm{Var}\left[ Z_{\ell }^{A}-Z_{\ell -1}^{A}|\mathcal{F%
}_{T_{\ell -1}}\right] &=&\sum_{\ell =1}^{k}\sum_{t=T_{\ell -1}+1}^{T_{\ell
}}\mathrm{Var}\left[ Y_{t}^{A}|\mathcal{F}_{t-1}\right] \\
&=&\sum_{t=1}^{T}\mathrm{Var}\left[ Y_{t}^{A}|\mathcal{F}_{t-1}\right] \leq
k\eta ,
\end{eqnarray*}%
where the inequality is by (\ref{eq:tau}).

Next, we claim that the total weight that is moved in each round is at most $%
1;$ i.e., 
\begin{equation}
\left\vert Z_{\ell }^{A}-Z_{\ell -1}^{A}\right\vert \leq 1  \label{eq:diffZ}
\end{equation}%
for every set $A\subset \lbrack n]$ and $\ell =1,...,k.$ Indeed, consider
the first round, and assume that it ended at step $T_{1}=t$; i.e., $%
Z_{1}=X_{t}.$ Thus, the first $t-1$ steps were case (i), and step $t$ was
case (ii); i.e., $x_{0}^{1}+x_{0}^{2}<1$ in step $1$; then $%
(x_{0}^{1}+x_{0}^{2})+x_{0}^{3}$ in step $2$; and so on, up to $%
(x_{0}^{1}+...+x_{0}^{t-1})+x_{0}^{t}<1$ in step $t-1;$ and finally $%
(x_{0}^{1}+...+x_{0}^{t})+x_{0}^{t+1}\geq 1$ in step $t.$ Let $r\in
\{1,...,t\}$ be the \textquotedblleft winner" of the first $t-1$ steps,
i.e., $X_{t-1}^{r}=x_{0}^{1}+...+x_{0}^{t}=:\xi $ (for all other $i$ in $%
\{1,...t\}$ we have $X_{t-1}^{i}=0$). If the result of step $t$ is that $%
X_{t}^{r}=1$ and $X_{t}^{t+1}=\xi +x_{0}^{t}-1,$ then the only coordinate
that has increased from $Z_{0}=x_{0}$ to $Z_{1}=X_{t}$ is coordinate $r$,
and so the total weight that was moved is equal to the gain of $r,$ which is 
$1-x_{0}^{r}<1.$ If the result of step $t$ is that $X_{t}^{r}=\xi
+x_{0}^{t}-1$ and $X_{t}^{t+1}=1,$ then $t+1$ is not included in the set of
those that lost weight (because $t+1$ has surely gained weight), and so the
total weight that was moved, which equals the total weight that was lost, is
at most the sum of the weights of $1,...,t,$ which is $%
x_{0}^{1}+...+x_{0}^{t}=\xi <1.$ Thus in every situation the total weight
that was moved is $<1,$ and so $\left\vert Z_{1}^{A}-Z_{0}^{A}\right\vert <1$
for every set $A\subseteq \lbrack n].$ The same argument applies to every
round, which proves (\ref{eq:diffZ}).

We can therefore apply the Fan, Grama, and Liu inequality (see (\ref%
{eq:azuma}) in the Appendix) with $v=k\eta $, $c=k\delta ,$ and $T=k,$
yielding%
\begin{eqnarray*}
\mathbb{P}\left[ \frac{1}{k}\left\vert S\cap A\right\vert \geq w^{A}+\delta %
\right] &=&\mathbb{P}\left[ X_{T}^{A}-X_{0}^{A}\geq k\delta \right] \\
&\leq &\exp \left( -D\left( \frac{k\eta +k\delta }{k\eta +k}\left\Vert \frac{%
k\eta }{k\eta +k}\right. \right) \cdot k\right) ,
\end{eqnarray*}%
which equals $\pi ^{\ast }(\eta ,\delta ,k),$ as claimed. Again, the
martingale $-X_{t}^{A}$ provides the second inequality.
\end{proof}

\bigskip

\noindent \textbf{Remarks. }\emph{(a)} One can carry out the sequence of
steps of one round with a single randomization, as follows. Let the weights
involved be, in order, $x^{1},...,x^{t},x^{t+1};$ thus, $\xi
:=x^{1}+...+x^{t}<1$ and $\xi +x^{t+1}\geq 1$ (cf. the proof above). Then
for each $r$ in $\{1,...,t\},$ with probability $(x^{r}/\xi )\cdot (1-\xi
)/(2-\xi -x^{t+1})$ the weights at the end of the round are $\tilde{x}%
^{r}=1, $ $\tilde{x}^{t+1}=\xi +x^{t+1}-1$, and $\tilde{x}^{i}=0$ for all
other $i$ in $\{1,...,t\},$ and with probability $(x^{r}/\xi )\cdot
(1-x^{t+1})/(2-\xi -x^{t+1})$ they are $\tilde{x}^{r}=\xi +x^{t+1}-1,$ $%
\tilde{x}^{t+1}=1$, and $\tilde{x}^{i}=0.$ Thinking of this as
\textquotedblleft contests" where the \textquotedblleft winner" gets as much
weight as possible from the \textquotedblleft loser," the procedure first
selects the winner $r$ among $\{1,...,t\}$---with probabilites proportional
to the weights---and then the winner between $r$ (whose weight is now $\xi
=x^{1}+...+x^{t}$) and $t+1$---with probabilities proportional to $1-\xi $
and $1-x^{t+1}.$

Doing this for all rounds yields a procedure $\mathfrak{X}^{\ast \ast }$
that takes at most $k$ time periods.

\emph{(b) }The \textquotedblleft in order" requirement on $\mathfrak{X}%
^{\ast }$ is used to ensure that in every round, the active pair $\{i,j\}$
in each step consists of the \textquotedblleft winner" of the previous step
together with a new element. However, as each round is considered
separately, there is no need for this order to be kept between rounds (and
so the round does not have to start with the winner of the previous round).
That is, the requirement on $\mathfrak{X}^{\ast }$ is just to keep a
consistent sequential order \emph{within each round}.

\emph{(c)} The fact that the martingale $Z_{\ell }$ of $\mathfrak{X}^{\ast }$
has at most $k$ time periods in which the martingale differences are bounded
by $1$ (rather than $n$ time periods for the martingale $X_{t}$) yields an
immediate exponential bound, without having to estimate the sum of variances 
$V_{T}$: the Azuma--Hoeffding inequality ((\ref{eq:azuma}) in the Appendix,
with $T=k$) gives%
\begin{eqnarray*}
\mathbb{P}\left[ \frac{1}{k}\left\vert S\cap A\right\vert \geq \alpha
+\delta \right] &=&\mathbb{P}\left[ Z_{k}^{A}-Z_{0}^{A}\geq k\delta \right]
\\
&\leq &\exp \left( -\frac{(k\delta )^{2}/2}{k}\right) =\exp \left( -\frac{1}{%
2}\delta ^{2}k\right) .
\end{eqnarray*}%
Of course, Theorems \ref{th:freedman} and \ref{th:fgl} yield better bounds.

\section{Using the Bounds}

We provide two comments on using the above bounds. Note that both $\pi $ and 
$\pi ^{\ast }$ are increasing functions of their first variable\footnote{%
This follows, for instance, from the arguments in Section 3 of Fan, Grama,
and Liu [2012]: for $\pi ^{\ast }$ by equation (41) and Lemma 3.2, and for $%
\pi $ by equation (43) (with their $v^{2}/n$ as our $\eta $).} $\eta $.

\bigskip

\textbf{The best of }$A$\textbf{\ and its complement}\emph{. }Let $%
B:=[n]\backslash A$ be the complement of $A;$ its relative weight is $%
w^{B}=1-w^{A}=1-\alpha .$ Since $(1/k)\left\vert S\cap A\right\vert \geq
\alpha +\delta $ if and only if $(1/k)\left\vert S\cap B\right\vert \leq
1-\alpha -\delta ,$ we can take the better of the two bounds $\pi ^{\ast
}(\eta ^{A},\delta ,k)$ and $\pi ^{\ast }(\eta ^{B},\delta ,k)$ (where $\eta
^{A}:=\alpha -k\sum_{i\in A}(w^{i})^{2}\leq \alpha $ and $\eta
^{B}:=1-\alpha -k\sum_{i\in B}(w^{i})^{2}\leq 1-\alpha $), and get%
\begin{equation*}
\mathbb{P}\left[ \frac{1}{k}\left\vert S\cap A\right\vert \geq \alpha
+\delta \right] \leq \pi ^{\ast }(\min \{\eta ^{A},\eta ^{B}\},\delta ,k)
\end{equation*}%
(by the monotonicity of $\pi ^{\ast }$ in $\eta $); the same for $\mathbb{P}%
\left[ (1/k)\left\vert S\cap A\right\vert \leq \alpha -\delta \right] ,$ and
also for $\pi $. Since $\min \{\eta ^{A},\eta ^{B}\}\leq \min \{\alpha
,1-a\}\leq 1/2,$ we immediately get a uniform bound that is independent of%
\footnote{%
The inequality is by Pinsker's inequality $D(q||p)\geq 2(q-p)^{2}$. There
are sharper inequalities; for instance, it can be shown that $%
D(q||p)/(q-p)^{2}$ is minimized at $q=1-p,$ which yields 
\begin{equation*}
D(q||p)\geq C(p)~(q-p)^{2}
\end{equation*}%
for%
\begin{equation*}
C(p)=\frac{D(1-p||p)}{(1-2p)^{2}}=\frac{1}{1-2p}\ln \left( \frac{1-p}{p}%
\right) .
\end{equation*}%
In our case we get $D(1/3+2\delta /3||1/3)\geq \gamma \delta ^{2}$ for $%
\gamma =(2/3)^{2}C(1/3)=4\ln (2)/3\approx 0.92$, and so $8/9$ may be
increased to $\gamma .$} $A$:%
\begin{equation*}
\pi ^{\ast }\left( \frac{1}{2},\delta ,k\right) =\exp \left( -D\left( \frac{1%
}{3}+\frac{2\delta }{3}\left\Vert \frac{1}{3}\right. \right) \cdot k\right)
\leq \exp \left( -\frac{8}{9}\delta ^{2}k\right) ;
\end{equation*}%
cf. $\exp (-2\delta ^{2}k)$ in (\ref{eq:with-repl}).

\bigskip

\textbf{Bounds on}\emph{\ }$\eta \mathbf{.}$ One needs to estimate $\eta
=\alpha -k\sum_{i\in A}(w^{i})^{2}$ (see (\ref{eq:eta})); more precisely,
bound it from above (because, again, $\pi $ and $\pi ^{\ast }$ are
increasing in $\eta $). An immediate bound is of course $\eta \leq \alpha .$
For a better bound, assume that $A$ contains at most $m$ elements\footnote{%
A trivial bound for $m$ is, of course, $m\leq n$; a slightly better one is $%
m\leq n-\left\lceil k(1-\alpha )\right\rceil $, because the number of
elements of $N\backslash A$, whose relative weight is $w^{N\backslash
A}=1-\alpha ,$ is at least $(1-\alpha )/(1/k)$ (since each $w^{i}$ is at
most $1/k$).} (i.e., $\left\vert A\right\vert \leq m$); then $\sum_{i\in
A}(w^{i})^{2}\geq m(\alpha /m)^{2}=\alpha ^{2}/m$ (by the convexity of the
function $x\mapsto x^{2}$), and so\footnote{%
A precise computation yields $\eta =\alpha -(k/m)\alpha ^{2}-(km)\beta $
where $m=\left\vert A\right\vert ,$ and $\beta :=(1/m)\sum_{i\in
A}(w^{i}-\alpha /m)^{2}=(1/m)\sum_{i\in A}(w^{i})^{2}-\alpha ^{2}/m^{2},$
the variance of the weights of $A.$}%
\begin{equation*}
\eta \leq \alpha -\frac{k}{m}\alpha ^{2}=:\bar{\eta}_{\alpha ,m}
\end{equation*}%
(with $\bar{\eta}_{\alpha ,\infty }=\alpha $ corresponding to the trivial
bound $\eta \leq \alpha $).

\bigskip

For example, take $k=100,$ $\alpha =1/5,$ and $\delta =1/3-1/5;$ thus, we
want to estimate the probability that a set that has $1/5$ of the weight
gets more than a $1/3$ of the sample. The upper bounds on this probability,
using $\bar{\eta}_{\alpha ,m}$, are as follows:%
\begin{equation*}
\begin{tabular}{|c||c|c|c|c|}
\hline
& $m=\infty $ & $m=1000$ & $m=100$ & $m=50$ \\ \hline\hline
With replacement [(\ref{eq:with-repl})] & $0.0077$ & $0.0077$ & $0.0077$ & $%
0.0077$ \\ \hline
Procedure $\mathfrak{X}$ [$\pi $] & $0.0249$ & $0.0233$ & $0.0117$ & $0.0037$
\\ \hline
Procedure $\mathfrak{X}^{\ast }$ [$\pi ^{\ast }$] & $0.0212$ & $0.0198$ & $%
0.0097$ & $0.0029$ \\ \hline
\end{tabular}%
\end{equation*}

\appendix

\section{Appendix. Tail Probability Bounds\label{appendix:tail}}

Let $(X_{t})_{t\geq 0}$ be a martingale adapted to the sequence $(\mathcal{F}%
_{t})_{t\geq 0}$ of $\sigma $-fields, let $Y_{t}:=X_{t}-X_{t-1}$ be the
martingale differences, and $V_{t}:=\sum_{s=1}^{t}\mathrm{Var}\left[ Y_{s}|%
\mathcal{F}_{s-1}\right] =\sum_{s=1}^{t}\mathbb{E}\left[ Y_{s}^{2}|\mathcal{F%
}_{s-1}\right] $ the sum of their conditional variances. Recall that $%
D(q||p) $ denotes the Kullback--Leibler divergence from $p$ to $q.$ The
following inequalities hold for every $T\geq 1$ and $c,v>0.$

\begin{itemize}
\item \emph{Chernoff--Hoeffding} \emph{inequality}. Let $Y_{t}$ be i.i.d. 
\textrm{Bernoulli}$(p)-p$ for some $0<p<1$ (we subtract $p$ so that\footnote{%
We are stating the inequality in a way that parallels the martingale
inequalities that follow. What (\ref{eq:ch-ho}) says is that 
\begin{equation*}
\mathbb{P}\left[ \frac{1}{T}\mathrm{Binomial}(T,p)\geq p+\delta \right] \leq
\exp (-D(p+\delta ||p)T)\leq \exp (-2\delta ^{2}T)
\end{equation*}%
for every $T\geq 1,$ $0<p<1,$ and $\delta >0.$} $\mathbb{E}\left[ Y_{t}%
\right] =0$), and thus $X_{T}-X_{0}$ is \textrm{Binomial}$(T,p)-Tp$; then%
\begin{equation}
\mathbb{P}\left[ X_{T}-X_{0}\geq c\right] \leq \exp \left( -D\left( \left. p+%
\frac{c}{T}\right\Vert p\right) \cdot T\right) \leq \exp \left( -\frac{2c^{2}%
}{T}\right) .  \label{eq:ch-ho}
\end{equation}%
See Theorem 1 and Example 3 in Chernoff [1952], and Theorem 1 in Hoeffding
[1963].

\item \emph{Azuma--Hoeffding inequality}. Let $\left\vert Y_{t}\right\vert
\leq 1$ for every $t\geq 1$; then%
\begin{equation}
\mathbb{P}\left[ X_{T}-X_{0}\geq c\right] \leq \exp \left( -\frac{c^{2}/2}{T}%
\right) .  \label{eq:azuma}
\end{equation}%
See Theorem 2 in Hoeffding [1963] and Azuma [1967].

\item \emph{Freedman inequality}. Let $|Y_{t}|\leq 1$ for every $t\geq 1$;
then%
\begin{equation}
\mathbb{P}\left[ X_{T}-X_{0}\geq c\right] \leq \left( \frac{v}{v+c}\right)
^{v+c}\exp (c)\leq \exp \left( -\frac{c^{2}/2}{v+c/3}\right) ,
\label{eq:freedman}
\end{equation}%
for any bound $v$ on $V_{T}$ (i.e., $v\geq V_{T}$ a.s.). See Theorem (1.6)
in Freedman [1975], which gives\footnote{%
The second bound is a slight improvement over the one given in Freedman's
paper; see Theorem 1.1 in Tropp [2011] or Remark 2.1 in Fan, Gram, and Liu
[2012].} the above probability bound for the event $\{X_{t}-X_{0}\geq c$ and 
$V_{t}\leq v$ for some $t\},$ and so implies (\ref{eq:freedman}) when $%
V_{T}\leq v$ (a.s.).

\item \emph{Fan, Grama, and Liu} \emph{inequality.} Let $Y_{t}\leq 1$ for
every $t\geq 1$; then%
\begin{equation}
\mathbb{P}\left[ X_{T}-X_{0}\geq c\right] \leq \exp \left( -D\left( \frac{v+c%
}{v+T}\left\Vert \frac{v}{v+T}\right. \right) \cdot T\right) ,
\label{eq:fgl}
\end{equation}%
for any bound $v$ on $V_{T}$. See Theorem 2.1 in Fan, Grama, and Liu [2012],
which gives the above probability bound\footnote{%
Fan \emph{et al.} write their bound as%
\begin{equation*}
\left[ \left( \frac{v}{v+c}\right) ^{v+c}\left( \frac{T}{T-c}\right) ^{T-c}%
\right] ^{\frac{T}{v+T}},
\end{equation*}%
which is easily seen to be the same as the righthand side of (\ref{eq:fgl}).
We prefer the expression with the Kullback--Leibler divergence as it makes
it easier to compare with, say, (\ref{eq:ch-ho}).} for the larger event $%
\{X_{t}-X_{0}\geq c$ and $V_{t}\leq v$ for some $t\leq T\}$.
\end{itemize}

The final inequality (\ref{eq:fgl}) implies all the previous ones (see
Remark 2.1 and Corollary 2.1 in Fan, Grama, and Liu [2012]), and it reduces
to the Chernoff--Hoeffding inequality (\ref{eq:ch-ho}) in the i.i.d.
Bernoulli case.\footnote{%
For i.i.d. $Y_{t}\sim \mathrm{Bernoulli}(p)-p$ apply (\ref{eq:fgl}) to $\hat{%
Y}_{t}=Y_{t}/(1-p)\leq 1,$ with $\hat{c}=c/(1-p)$ and $\hat{v}=Tp/(1-p)$.}

\end{document}